\newcommand{\beq}{\begin{equation}}
 \newcommand{\eeq}{\end{equation}}
 \newcommand{\bea}{\begin{eqnarray}}
 \newcommand{\eea}{\end{eqnarray}}
\newcommand{\argmin}{\operatornamewithlimits{arg\,min}}
\newtheorem{theorem}{Theorem}
\theoremstyle{definition}
\newtheorem{define}[theorem]{Definition}
\begin{document}

\title{Age of Information: Whittle Index for Scheduling Stochastic Arrivals}

\author{\IEEEauthorblockN{Yu-Pin Hsu\\}
\IEEEauthorblockA{Department of Communication Engineering\\ National Taipei University\\}
\IEEEauthorblockA{{yupinhsu@mail.ntpu.edu.tw}}
}



\maketitle

\begin{abstract}

\textit{Age of information} is a new concept that characterizes the freshness of  information at end devices. This paper studies the age of information from a scheduling perspective. We consider a wireless broadcast network where a base-station updates many users on \textit{stochastic} information arrivals. Suppose that only one user can be updated  for each time. In this context, we aim at developing a transmission scheduling algorithm for  minimizing the long-run average age. To develop a low-complexity transmission scheduling algorithm,   we  apply the Whittle's framework for restless bandits. We successfully derive the Whittle index in a closed form  and  establish the \textit{indexability}. Based on the Whittle index, we propose a scheduling algorithm, while experimentally  showing that it closely approximates an age-optimal scheduling algorithm.
\end{abstract}

\section{Introduction}
In recent years,  there has been a dramatic proliferation of research on  an \textit{age of information}. The age of information is inspired by a variety of network applications requiring  \textit{timely} information to accomplish  some tasks. Examples  include  information updates for \textit{smart-phone  users}, e.g.,  traffic and transportation, as well as status updates for \textit{smart systems}, e.g., smart transportation systems and smart grid systems. 

On the one hand, a smart-phone user needs timely  traffic and transportation information for planning the best route. On the other hand, timely information about vehicles' positions and speeds is needed for planning a collision-free transportation system. In both cases, snapshots of the information are generated by their sources at some epochs  and sent to the end devices (e.g., smart-phone users and vehicles) in the form of packets over wired or wireless networks. Since the information at the end devices is expected to be as timely as possible, the age of information is therefore proposed to capture the \textit{freshness of the information at the end devices}; more precisely, it measures the elapsed time since the generation of the freshest packets.  
The goal is to develop networks supporting the age-sensitive applications.  It is interesting  that either throughput-optimal design or delay-optimal design might not result in the minimum age \cite{age:kaul}.


In this paper, we consider that a base-station (BS) updates many users over a wireless broadcast network, where  new information is randomly generated. We assume that the BS can update at most one user for each transmission opportunity. Under the transmission constraint, a transmission scheduling algorithm manages how the channel resources  are allocated for each time, depending on the packet arrivals and the ages of the information. The scheduling  design is a critical issue to provide good network performance. We hence design and analyze a scheduling algorithm to minimize the long-run average age.

The wireless broadcast network  is similar to the model in the earlier work \cite{hsuage} of the present author; however,  a low-complexity scheduling algorithm is unexplored. To fill this gap, this work investigates an age-optimal scheduling from the perspective of \textit{restless bandits} \cite{gittins2011multi}. Whittle \cite{whittle} considers a  \textit{relaxed restless multi-armed bandit problem} and  decouple the problem into many  sub-problems consisting of  a single bandit, while proposing an index policy and a concept of \textit{indexability}. The Whittle index policy is asymptotically optimal under certain conditions \cite{weber1990index}, and in practice performs strikingly well \cite{larranaga2015stochastic}. 
Note that each user in our problem can be viewed as a restless bandit; as such, we  apply the  Whittle's approach to develop a scheduling algorithm.

\subsection{Contributions}
We transform our problem into a relaxed restless multi-armed bandit problem and investigate the problem from the Whittle's perspective. However, in general,  a closed form of the Whittle index might be  unavailable. To tackle this issue, we formulate  each decoupled sub-problem  as a Markov decision process (MDP), with the purpose of minimizing an average cost. Since our MDP involves an \textit{average cost} optimization  over \textit{infinite horizon} with a \textit{countably infinite state space},  our problem is challenging to  analyze \cite{MDP:Bertsekas}. We prove that an optimal policy  of the MDP  is \textit{stationary} and \textit{deterministic}; in particular, it is a simple \textit{threshold} type.  We then derive an optimal threshold  by exploiting the threshold type along with a \textit{post-action age}. It  turns out that the post-action age simplifies the calculation of the average cost; as such, we  successfully obtain the Whittle index in a closed form and show the \textit{indexability}. 
Finally, we propose a Whittle index  scheduling algorithm and numerically validate its performance.

\subsection{Related works}
The \textit{age of information}  has attracted many interests from the research community, e.g.,  \cite{age:kaul, age:Costa,bedewy2017age} and see the survey \cite{kosta2017age}. Of the most relevant works on scheduling multiple users are \cite{age:he,joowireless,age:igor,yatesage}. The works \cite{age:he,joowireless} consider queues at a BS to store all \textit{out-of-date} packets, different from ours. The paper \cite{age:igor} considers a buffer to store the \textit{latest} information with periodic arrivals; whereas information updates in \cite{yatesage} can be generated at will. Our work contributes to the age of information by developing a low-complexity algorithm for scheduling \textit{stochastic} information arrivals.

\section{System overview} \label{section:system}

\subsection{Network model} \label{subsection:model}
\begin{figure}[!t]
\centering
\includegraphics[width=.3\textwidth]{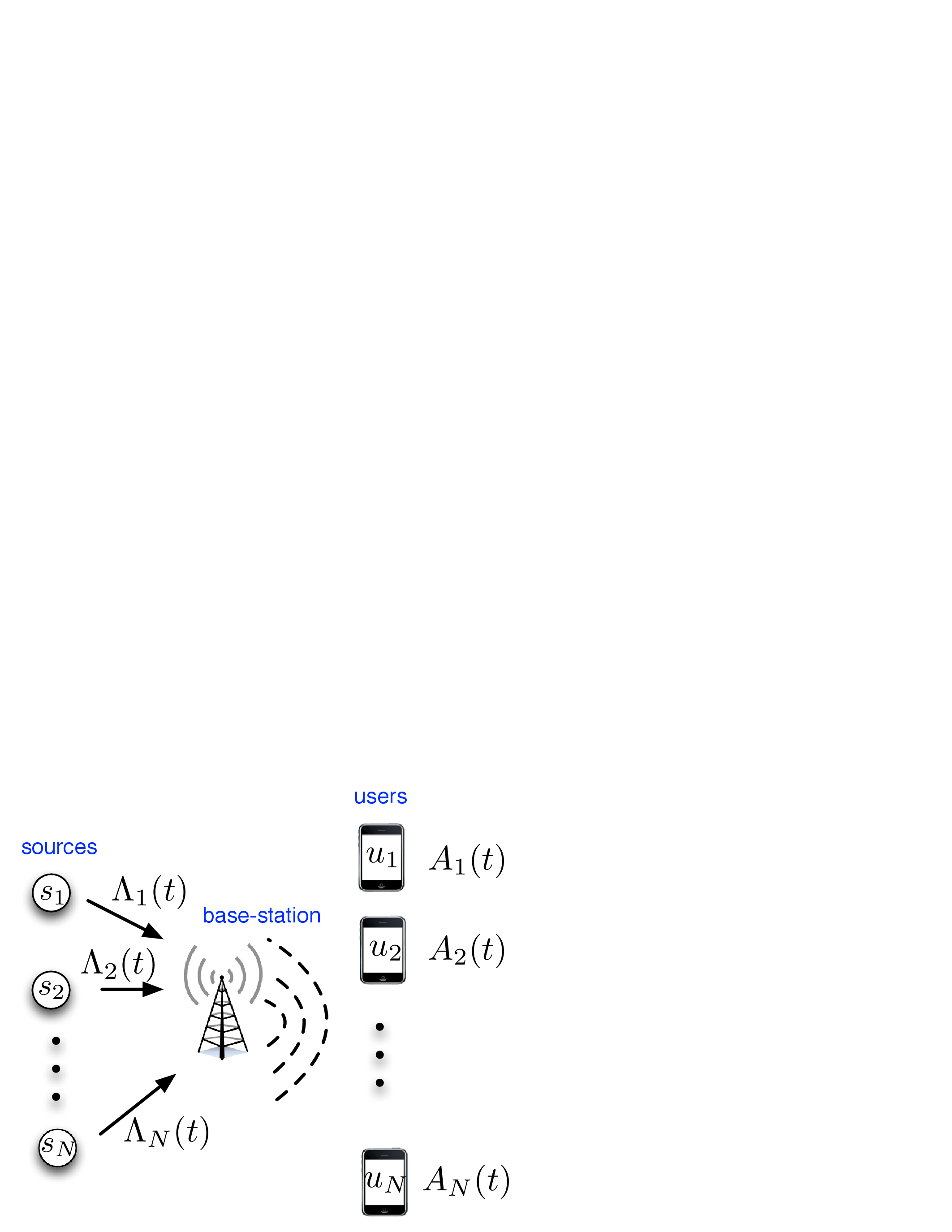}
\caption{A BS  updates $N$ users $u_1, \cdots, u_N$ on  information from sources $s_1, \cdots, s_N$, respectively.}
\label{fig:model}
\end{figure}

We consider a wireless broadcast network consisting of a base-station (BS) and $N$ wireless users $u_1, \cdots, u_N$  in Fig.  \ref{fig:model}. Each user $u_i$ is interested in a type of information generated by a source $s_i$, for $i=1, \cdots, N$, respectively. All information is sent in the form of packets by the BS over a \textit{noiseless} broadcast channel.

We consider a discrete-time system with slot \mbox{$t=0, 1,  \cdots$}. The packets from the sources (if any) arrive at the BS at the \textit{beginning} of each  slot.  The arrivals at the BS for different users are independent of each other, and also  independent and identically distributed (i.i.d.) over slots, governed by  a Bernoulli distribution.  Precisely, by $\Lambda_i(t)$ we indicate if a packet from source $s_i$ arrives at the BS in slot $t$, where $\Lambda_i(t)=1$ if there is a packet, with $P[\Lambda_i(t)=1]=p_i$; otherwise, $\Lambda_i(t)=0$.  

We assume that  the BS can send at most one packet during each  slot, i.e., the BS can update at most one user in each slot. Moreover, we focus on the setting that the BS does not buffer a packet if it is not transmitted in the arriving slot. The \textit{no-buffer network} is motivated by  \cite{hsuage}. 

By $D(t) \in \{0, 1, \cdots N\}$ we denote a decision of the BS in slot $t$, where $D(t)=0$ if no one will be updated in slot $t$ and $D(t)=i$ for $i=1, \cdots, N$ if  user $u_i$ is scheduled to be updated in slot $t$.   A \textit{scheduling algorithm} $\theta=\{D(0), D(1), \cdots\}$ specifies a  decision for each slot.  Next, we will define an \textit{age of information} as our design criterion. 

\subsection{Age of information model}
The age of information implies the freshness of the information \textit{at the users}. We initialize the ages of all arriving packets \textit{at the BS} to be zero. The age of information \textit{at a user}  becomes one on receiving a packet, due to one slot  of the transmission time. 
Let $X_i(t)$ be the age of information for user $u_i$ in slot $t$ \textit{before} the BS makes a  scheduling decision. Suppose that the age increases linearly with slots. Then, the dynamics of the age of information for user $u_i$ is
\begin{align*}
X_i(t+1)=\left\{
\begin{array}{ll}
1 & \text{if $\Lambda_i(t)=1$ and $D(t)=i$;}\\
X_i(t)+1 & \text{else,}
\end{array}
\right.
\end{align*}
where the age in the next slot is one if the user gets updated on the new information; otherwise, the  age increases by one.  
%
Since the BS can update at most one user for each slot, $X_i(t) \geq 1$ for all $i$, $X_i(t)\neq X_j(t)$ for all $i \neq j$, and $\sum_{i=1}^N X_i(t) \geq 1+2+\cdots+N$ for all $t$.

\subsection{Problem formulation} \label{subsection:mdp}

We  define the \textit{average age} under  a scheduling algorithm $\theta$  by
\begin{align*}
\limsup_{T \rightarrow \infty} \frac{1}{T+1} E_{\theta} \left[ \sum_{t=0}^T \sum_{i=1}^N X_i(t)\right], 
\end{align*}
where $E_{\theta}$ represents the conditional expectation, given that the algorithm $\theta$ is employed. Note that we focus on the total age, but our work can be easily extended to a weighted sum of the ages.  
Our goal is to develop a low-complexity  scheduling algorithm whose average age is close to the minimum by leveraging the Whittle's methodology \cite{whittle}. 

\section{Scheduling algorithm design}
We will develop a scheduling algorithm based on restless bandits \cite{gittins2011multi} in stochastic control theory. To reach the goal, in this section, we start with casting our problem as a \textit{restless multi-armed bandit} problem \cite{gittins2011multi}, followed by introducing the Whittle index  \cite{whittle} as a solution to the multi-armed bandit problem. A challenge of this approach is to obtain the Whittle index. We then explicitly derive the Whittle index in a simpler way using a \textit{post-action age}. We finally propose a scheduling algorithm based on the Whittle index. 

\subsection{Restless bandits and Whittle's approach}

A restless bandit generalizes a classic bandit  by allowing the bandit to keep evolving under a \textit{passive} action, but in a distinct way from its  continuation under an \textit{active} action. However, the  restless  bandits problem, in general, is PSPACE-hard \cite{gittins2011multi}. Whittle hence investigates a relaxed version, where a constraint on the number of active bandits for each slot is replaced by the expected number.  With this relaxation, Whittle then applies a Lagrangian approach to decouple the multi-armed bandit  problem into multiple sub-problems. 

We can regard each user in our problem as a restless bandit. Following the Whittle's approach, we can decouple our problem into $N$ sub-problems. A sub-problem consists of a user $u_i$ and adheres to the network model in Section \ref{section:system} with $N$ = 1, except for an additional cost $C$ for updating the user. In each sub-problem, we aim at  determining whether or not the user should be  updated for each slot, in order to strike a balance between the updating cost and the cost incurred by age. In fact, the cost $C$ is a scalar Lagrange multiplier in the Lagrangian approach. Since each sub-problem consists of  a single user, hereafter we omit the index $i$ for simplicity.

\subsection{Decoupled sub-problem}
We  formulate the sub-problem as a Markov decision process (MDP), with the components \cite{MDP:Puterman} as follows.

\textbf{States}: We define the state $\mathbf{s}(t)$ of the MDP in slot $t$ by $\mathbf{s}(t)=(X(t), \Lambda(t))$. This is an infinite-state MDP as the age is possibly unbounded.    

\textbf{Actions}: Let $a(t) \in \{0, 1\}$ be an action of the MDP in slot $t$ indicating the BS's decision, where $a(t)=1$ if the BS decides \textit{to update} the user and $a(t)=0$ if the BS decides \textit{to idle}.

\textbf{Transition probabilities}:  
The transition probability from state $\mathbf{s}=(x, \lambda)$ to state $\mathbf{s}'$ under  action $a(t)=a$ is: 
\begin{align*}
&P[\mathbf{s}'=(x+1,1)|\mathbf{s}=(x, \lambda), a(t)=0]=p;\\
&P[\mathbf{s}'=(x+1,0)|\mathbf{s}=(x, \lambda), a(t)=0]=1-p;\\
&P[\mathbf{s}'=(1,1)|\mathbf{s}=(x, 1), a(t)=1]=p;\\
&P[\mathbf{s}'=(1,0)|\mathbf{s}=(x, 1), a(t)=1]=1-p;\\
&P[\mathbf{s}'=(x+1,1)|\mathbf{s}=(x, 0), a(t)=1]=p;\\
&P[\mathbf{s}'=(x+1,0)|\mathbf{s}=(x, 0), a(t)=1]=1-p.
\end{align*}

\textbf{Cost}: Let $C(\mathbf{s}(t), a(t))$ be an \textit{immediate cost} if action  $a(t)$ is taken in slot $t$ under  state $\mathbf{s}(t)$, with the definition as follows.
\begin{align}
C\Bigl(\textbf{s}(t)=(x, \lambda), a(t)=a\Bigr) \triangleq (x+1- x \cdot a\cdot \lambda) +C\cdot a, \label{eq:cost}
\end{align}
where the first part $x+1- x \cdot a\cdot \lambda$ is the resulting age in the next slot and the second part is the incurred cost for updating the user. 

A \textit{policy} $\mu=\{a(0), a(1), \cdots\}$ of the MDP  specifies an action for each slot. A policy $\mu$  is \textit{history dependent} if $a(t)$ depends on $\mathbf{s}(0), \cdots, \mathbf{s}(t)$ and $a(0) \cdots, a(t-1)$.  A policy is \textit{stationary} if $a(t_1)=a(t_2)$ when $\mathbf{s}(t_1)=\mathbf{s}(t_2)$ for any $t_1, t_2$.   Moreover, a \textit{randomized} policy chooses an action with a probability, while a \textit{deterministic} policy chooses an action with certainty. 

The \textit{average cost} under a policy $\mu$ is defined by
\begin{align*}
\limsup_{T \rightarrow \infty} \frac{1}{T+1} E_{\mu} \left[ \sum_{t=0}^T C(\mathbf{s}(t), a(t))\right]. 
\end{align*}

\begin{define}
A policy $\mu$ (that can be history dependent) is \textit{cost-optimal} if it minimizes the  average cost.
\end{define}
The objective of the MDP is to find a policy $\mu$ that minimizes the  average cost. According to \cite{MDP:Puterman}, there may not exist a cost-optimal policy that is stationary or deterministic. Hence, in the next section, we aim at investigating  a cost-optimal policy.

\subsection{Characterizing a cost-optimal policy}
We will study  structures of a cost-optimal policy  in this section. First, we show that a cost-optimal policy is stationary and deterministic as follows.

\begin{theorem} \label{theorem:stationary}
\begin{figure}[!t]
\centering
\includegraphics[width=.35\textwidth]{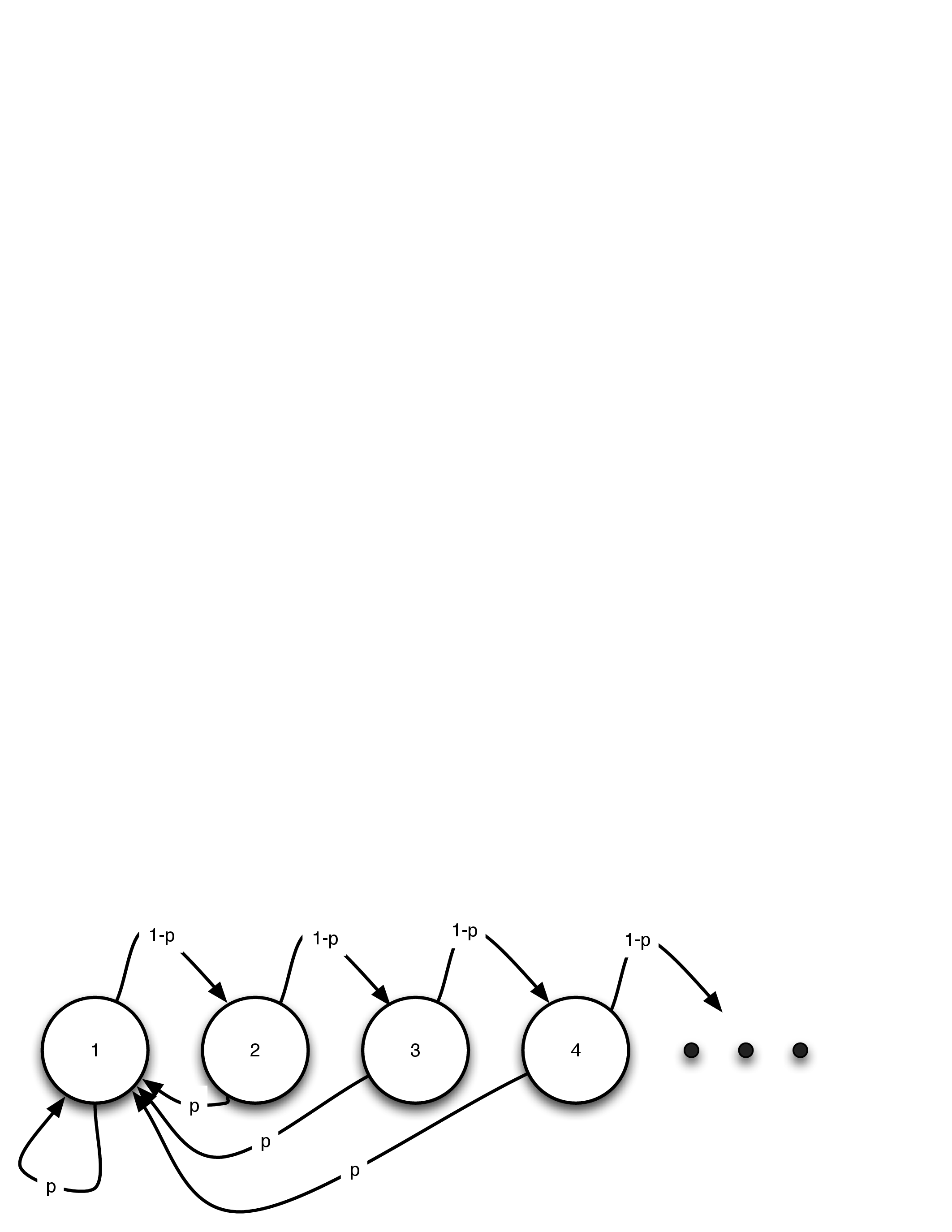}
\caption{The age $X(t)$ under the policy $f$ forms a DTMC.}
\label{fig:age-dtmc}
\end{figure}
There exists a stationary and deterministic policy that is cost-optimal, independent of  initial state $\mathbf{s}(0)$.
\end{theorem}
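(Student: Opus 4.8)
The plan is to prove the theorem by the \emph{vanishing discount approach}, because for an average-cost MDP with a countably infinite state space the existence of a stationary deterministic optimum does not follow from finite-state dynamic programming and must instead be imported from the corresponding discounted problems. For each discount factor $\beta \in (0,1)$ I would first consider the $\beta$-discounted version of the sub-problem. Since the immediate cost $C(\mathbf{s},a)$ in \eqref{eq:cost} is nonnegative and the action set $\{0,1\}$ is finite, standard discounted dynamic programming (see \cite{MDP:Puterman,MDP:Bertsekas}) guarantees that the optimal discounted value function $V_\beta$ is well defined, solves the discounted Bellman optimality equation, and admits a stationary deterministic minimizer for every $\beta$. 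The remaining work is to let $\beta \uparrow 1$ and argue that the limit yields a solution of the \emph{average cost optimality equation} (ACOE), whose minimizing actions then define the desired policy.

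To justify this passage to the limit I would verify the standard countable-state vanishing-discount conditions (see \cite{MDP:Puterman,MDP:Bertsekas}). Fix the reference state $\mathbf{s}_0$ of smallest age and put $h_\beta(\mathbf{s}) = V_\beta(\mathbf{s}) - V_\beta(\mathbf{s}_0)$. Three facts are needed: (i) $V_\beta(\mathbf{s}) < \infty$ for all $\mathbf{s}$ and $\beta$; (ii) $h_\beta$ is bounded below uniformly in $\beta$; and (iii) there is a nonnegative $M(\mathbf{s})$ with $h_\beta(\mathbf{s}) \le M(\mathbf{s})$ for all $\beta$, integrable against the one-step transition under the minimizing action. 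Fact (i) follows by exhibiting the simple stationary policy that updates the user exactly when a fresh packet is present ($a=1$ iff $\lambda=1$); under it the age resets to $1$ at every arrival, so it is bounded in expectation and the discounted cost is finite. For fact (ii) I would show, by monotonicity of value iteration, that $V_\beta(\mathbf{s})$ is nondecreasing in the age $x$ --- the per-slot cost is nondecreasing in $x$ for each fixed action and the transitions preserve the ordering of ages --- so that $h_\beta(\mathbf{s}) \ge 0$ since $\mathbf{s}_0$ carries the least age, giving a uniform lower bound.

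The crux, and the step I expect to be the main obstacle, is the uniform upper bound (iii): because the age is unbounded, I must control how costly it is, relative to $\mathbf{s}_0$, to drive the system back to low age from an arbitrarily large age $x$. I would bound $h_\beta$ by a coupling argument. Run two copies of the chain, one started at $(x,\lambda)$ and one at $\mathbf{s}_0$, driven by a \emph{common} arrival sequence and both governed by the update-on-arrival policy above. Both copies reset to age $1$ at the first slot carrying an arrival, after which they coincide; the expected length of this initial epoch is geometric with parameter $p$. During that epoch the two age processes differ by the fixed offset present at time $0$, so the accumulated cost gap is at most linear in $x$, yielding an upper envelope $M(x,\lambda)$ that grows only linearly in $x$. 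Linear growth is summable against the arrival-driven one-step transition (whose next age is at most $x+1$), so the integrability requirement in (iii) holds.

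With (i)--(iii) in hand, the vanishing-discount theorem produces a constant $g^\star$ and a bounded-below function $h^\star$, obtained as a pointwise subsequential limit of $h_{\beta_k}$ as $\beta_k \uparrow 1$, solving the ACOE
\begin{align*}
g^\star + h^\star(\mathbf{s}) = \min_{a \in \{0,1\}} \Bigl[\, C(\mathbf{s},a) + \sum_{\mathbf{s}'} P[\mathbf{s}'\mid \mathbf{s},a]\, h^\star(\mathbf{s}') \,\Bigr].
\end{align*}
Standard verification arguments then identify $g^\star$ as the optimal average cost, show it is independent of the initial state $\mathbf{s}(0)$, and certify that any stationary deterministic policy selecting a minimizing action $a$ on the right-hand side for each $\mathbf{s}$ is cost-optimal. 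This is precisely the stationary, deterministic, initial-state-independent optimal policy claimed in Theorem~\ref{theorem:stationary}.
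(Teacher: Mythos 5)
Your high-level route is the same as the paper's: the paper also proves Theorem \ref{theorem:stationary} via the vanishing-discount machinery for countable-state average-cost MDPs, citing \cite{stationary-policy:Sennott}, and it uses exactly your two main ingredients --- the update-on-arrival policy (your ``$a=1$ iff $\lambda=1$'' policy, the paper's $f$), whose age process is an ergodic DTMC with geometric tail and finite average cost $1/p + Cp$, and monotonicity of the discounted value in the age to get the uniform lower bound on the relative value with $L=0$. The difference is that the version of Sennott's theorem the paper invokes needs only those two conditions (a stationary deterministic policy with finite average cost, plus the uniform lower bound), whereas you work with the classical three-condition form and therefore must also supply the uniform upper envelope (iii). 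Two remarks on your (ii): ``smallest age'' alone does not suffice as a reference state --- you also need $\lambda=1$ there and monotonicity of $V_\beta$ in $\lambda$, since age-monotonicity only compares states with the same arrival indicator.

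The genuine gap is in your verification of (iii). Coupling two trajectories that both run \emph{forever} under $f$ bounds $J_\beta(\mathbf{s};f)-J_\beta(\mathbf{s}_0;f)$, not $h_\beta(\mathbf{s}) = V_\beta(\mathbf{s})-V_\beta(\mathbf{s}_0)$. These differ by $J_\beta(\mathbf{s}_0;f)-V_\beta(\mathbf{s}_0)\geq 0$, and that quantity is \emph{not} bounded uniformly in $\beta$: $f$ is a threshold-$1$ policy and is in general strictly suboptimal in average cost (by Theorem \ref{theorem:optimal-threshold}, the optimal threshold exceeds $1$ as soon as $C \geq I(1,1)=1/p$), so $(1-\beta)J_\beta(\mathbf{s}_0;f)\to g_f$ while $\limsup_{\beta\uparrow 1}(1-\beta)V_\beta(\mathbf{s}_0)\leq g^* < g_f$, whence $J_\beta(\mathbf{s}_0;f)-V_\beta(\mathbf{s}_0)\to\infty$ as $\beta\uparrow 1$ and no uniform-in-$\beta$ envelope $M(\mathbf{s})$ follows from your coupling. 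The standard repair --- and what Sennott's third condition actually calls for --- is a \emph{first-passage} bound: let $\tau$ be the hitting time of the reference state under $f$ started from $\mathbf{s}$, and use the dynamic-programming inequality $V_\beta(\mathbf{s}) \leq E_f\bigl[\sum_{t=0}^{\tau-1}\beta^t C(\mathbf{s}(t),a(t))\bigr] + E\bigl[\beta^\tau\bigr] V_\beta(\mathbf{s}_0) \leq E_f\bigl[\sum_{t=0}^{\tau-1} C(\mathbf{s}(t),a(t))\bigr] + V_\beta(\mathbf{s}_0)$, which is legitimate because costs (hence $V_\beta$) are nonnegative and $\beta^\tau\leq 1$; the undiscounted expected first-passage cost is finite and linear in $x$, giving (iii). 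Note this also forces the reference state to be one that $f$ actually reaches --- e.g.\ $(1,1)$, hit at the first occurrence of two consecutive arrivals --- rather than the age-minimal state $(0,1)$ used for the lower bound, which is never visited, so a first-passage bound to it would be vacuous.
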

\begin{proof}
Given  initial state $\mathbf{s}(0)=\mathbf{s}$, we define the \textit{expected total  discounted cost} \cite{MDP:Puterman} under  policy $\mu$ by
\begin{align*}
J_{\alpha}(\mathbf{s};\mu)=\limsup_{T \rightarrow \infty} E_{\mu}\left[ \sum_{t=0}^T \alpha^t C(\mathbf{s}(t), a(t))|\mathbf{s}(0)=\mathbf{s} \right],
\end{align*}
where $0<\alpha<1$ is a discount factor. Moreover, let $J_{\alpha}(\mathbf{s})=\min_{\mu}J_{\alpha}(\mathbf{s};\mu)$ be the minimum   expected total discounted cost. A policy that minimizes the  expected total $\alpha$-discounted cost is called \textit{$\alpha$-optimal policy}.

According to \cite{stationary-policy:Sennott},  a deterministic stationary policy  is cost-optimal  if the following two conditions hold. 
\begin{enumerate}
%

	\item  \textit{There exists a  deterministic stationary  policy of the MDP such that the associated  average cost is finite}: Let $f$ be the deterministic stationary policy of always choosing the action $a(t)=1$ for each slot $t$ if there is an arrival. 	The age $X(t)$ under the policy $f$ forms a discrete-time Markov chain (DTMC) in Fig. \ref{fig:age-dtmc}. The steady-state distribution $\boldsymbol{\pi}=(\pi_1, \pi_2, \cdots, )$ of the DTMC is 
\begin{align*}
	\pi_i=p(1-p)^{i-1}\,\,\,\,\text{for all $i=1,2, \cdots$}.
\end{align*}	
Hence, the average age is 
\begin{align*}
\sum_{i=1}^{\infty} i \pi_i= \sum_{i=1}^{\infty} i p(1-p)^{i-1} = \frac{1}{p}.
\end{align*}
On the other hand, the average updating cost is $C \cdot p$ as the arrival probability is $p$. Hence,  the  average cost under the policy $f$ is   the average age (i.e., $1/p$) plus the average updating cost (i.e., $C\cdot p$), which is finite and yields the result. 	
\item \textit{There exists a non-negative $L$ such that the relative cost function $h_{\alpha}(\mathbf{s}) \triangleq J_{\alpha}(\mathbf{s})-J_{\alpha}(\mathbf{0}) \geq -L$ for all $\mathbf{s}$ and $\alpha$, where $\mathbf{0}$ is a reference state}: Similar to  \cite{hsuage}, we can show that $J_{\alpha}(x,\lambda)$ is a non-decreasing function in  age $x$ given arrival indicator $\lambda$; moreover, $J_{\alpha}(x,\lambda)$ is a non-increasing function in $\lambda$ given $x$. Then, we can choose $L=0$ by  choosing the reference state  $\mathbf{0}=(0,1)$. 
\end{enumerate}
By verifying the two conditions, the theorem immediately follows from \cite{stationary-policy:Sennott}.
\end{proof}

Next, we further investigate a cost-optimal policy by showing 
 that it is a special type of  deterministic stationary  policy.

\begin{define}
A \textit{threshold-type} policy is a deterministic stationary   policy of the MDP. The action for state $(x,0)$ is to idle, for all $x$. 
Moreover, if the  action for state $(x,1)$ is to update, then the action for  state $(x+1,1)$ is to update as well. In other words, there exists a threshold $\bar{X} \in \{1,2, \cdots\}$ such that the action is to update if there is an arrival and the age is greater than or equal to $\bar{X}$; otherwise, the action is  to idle. 
\end{define}

\begin{theorem} \label{theorem:threshold}
If $C\geq 0$, then there exists a policy of the threshold type that is cost-optimal. 
\end{theorem}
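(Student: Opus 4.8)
The plan is to first establish the threshold structure for the $\alpha$-discounted problem and then transfer it to the average-cost problem through the same vanishing-discount argument already invoked for Theorem~\ref{theorem:stationary}. The whole argument rests on the monotonicity of the discounted value function $J_\alpha(x,\lambda)$ in the age $x$ (for fixed $\lambda$), which was asserted in the proof of Theorem~\ref{theorem:stationary}; I would take this as the key input and not reprove it.

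First I would dispose of the no-arrival states $(x,0)$. Comparing the transition probabilities listed for the MDP, the actions $a=1$ and $a=0$ induce \emph{identical} transitions out of $(x,0)$ (both move to $(x+1,1)$ w.p.\ $p$ and to $(x+1,0)$ w.p.\ $1-p$), while by \eqref{eq:cost} the immediate cost of updating exceeds that of idling by exactly $C\ge 0$. Hence idling weakly dominates updating at every state $(x,0)$, for every finite horizon and every $\alpha$, so there is an $\alpha$-optimal policy that idles at all $(x,0)$. This secures the first requirement in the definition of the threshold type.

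Next, for the arrival states $(x,1)$, I would write the $\alpha$-discounted optimality equation and compare the two state-action values. Updating sends the chain to $(1,\cdot)$ at immediate cost $1+C$, whereas idling sends it to $(x+1,\cdot)$ at immediate cost $x+1$; thus the advantage of idling over updating is
\begin{align*}
\Delta_\alpha(x) = (x-C) + \alpha\Bigl[p\,J_\alpha(x+1,1)+(1-p)\,J_\alpha(x+1,0)\Bigr] - K_\alpha,
\end{align*}
where $K_\alpha = \alpha\bigl[p\,J_\alpha(1,1)+(1-p)\,J_\alpha(1,0)\bigr]$ is independent of $x$. Updating is $\alpha$-optimal at $(x,1)$ exactly when $\Delta_\alpha(x)\ge 0$. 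Since $x-C$ is strictly increasing and, by the monotonicity of $J_\alpha(\cdot,\lambda)$, the bracketed term is non-decreasing in $x$, the advantage $\Delta_\alpha(x)$ is non-decreasing in $x$. Therefore $\{x:\Delta_\alpha(x)\ge 0\}$ is an up-set, which yields a threshold $\bar{X}_\alpha$ such that updating is $\alpha$-optimal iff $x\ge \bar{X}_\alpha$; that is, a threshold-type policy is $\alpha$-optimal.

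Finally I would let $\alpha\uparrow 1$. The deterministic stationary policies here are indexed by the threshold $\bar{X}\in\{1,2,\dots,\infty\}$, a countable set (compact once $\infty$ is adjoined), so along some subsequence $\alpha_n\uparrow 1$ the $\alpha_n$-optimal thresholds are constant, say equal to $\bar{X}^\ast$. By the same Sennott-type result used for Theorem~\ref{theorem:stationary}, a pointwise limit of $\alpha$-optimal policies is average-cost optimal, so the threshold-$\bar{X}^\ast$ policy is cost-optimal; the value $\bar{X}^\ast<\infty$ is guaranteed because the policy $f$ of Theorem~\ref{theorem:stationary} already attains finite average cost while the never-update policy ($\bar{X}=\infty$) produces infinite average age. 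The main obstacle I anticipate lies in this last passage: making the vanishing-discount limit rigorous over the \emph{countably infinite} state space---ensuring the subsequential limit of policies is well defined and genuinely average-optimal---rather than in the threshold monotonicity itself, which is a short consequence of the monotonicity of $J_\alpha$.
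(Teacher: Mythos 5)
Your proof is correct and takes essentially the same route as the paper's: idling dominates at states $(x,0)$ because both actions induce identical transitions and updating adds $C\geq 0$; the update-versus-idle cost difference at $(x,1)$ is monotone in $x$ by the monotonicity of $J_\alpha(\cdot,\lambda)$, giving a threshold-type $\alpha$-optimal policy; and the structure passes to the average-cost criterion via the vanishing-discount limit. Your handling of the last step (subsequence extraction over thresholds and ruling out $\bar{X}^\ast=\infty$ via the finite-cost policy $f$) is merely a more explicit rendering of what the paper delegates to its citation of the limit-point property of $\alpha$-optimal policies.
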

\begin{proof}
It is obvious that an optimal action for state $(x,0)$ is to idle if $C \geq 0$. To establish the optimality of the threshold structure for state $(x,1)$, we need  the  \textit{discounted cost  optimality equation} (see the proof of Theorem \ref{theorem:stationary} and \cite{MDP:Puterman}): if the first condition in the proof of Theorem \ref{theorem:stationary} holds, then for any state $\mathbf{s}$ the minimum expected total discounted cost $J_{\alpha}(\mathbf{s})$ satisfies 
\begin{align*}
J_{\alpha}(\mathbf{s})=\min_{a \in \{0,1\}} C(\mathbf{s}, a)+\alpha E[J_{\alpha}(\mathbf{s}')],
\end{align*}
where the expectation is taken over all possible next state $\mathbf{s}'$ reachable from  state $\mathbf{s}$. 

Subsequently, we intend to prove that an $\alpha$-optimal policy is the threshold type. Let $\mathscr{J}_{\alpha}(\mathbf{s};a)=C(\mathbf{s}; a)+\alpha E[J_{\alpha}(\mathbf{s}')]$. Then, $J_{\alpha}(\mathbf{s})=\min_{a \in \{0,1\}} \mathscr{J}_{\alpha}(\mathbf{s};a)$. Moreover, an $\alpha$-optimal action for state $\mathbf{s}$ is $\argmin_{a \in \{0,1\}} \mathscr{J}_{\alpha}(\mathbf{s};a)$ \cite{MDP:Puterman}. Suppose an $\alpha$-optimal action for state $(x,1)$ is to update, i.e., 
\begin{align*}
\mathscr{J}_{\alpha}(x,1;1)-\mathscr{J}_{\alpha}(x,1;0) \leq 0.
\end{align*}
Then, an $\alpha$-optimal action for state $(x+1,1)$ is still to update since
\begin{align*}
&\mathscr{J}_{\alpha}(x+1,1;1)-\mathscr{J}_{\alpha}(x+1,1;0)\\
=&\left(1+C+\alpha E[J_{\alpha}(1,\lambda')] \right) -\left(x+2+\alpha E[J_{\alpha}(x+2,\lambda')] \right)\\
\mathop{\leq}^{(a)} & \left(1+C+\alpha E[J_{\alpha}(1,\lambda')] \right) -\left(x+1+\alpha E[J_{\alpha}(x+1,\lambda')] \right)\\
=&J_{\alpha}(x,1;1)-J_{\alpha}(x,1;0) \leq 0,
\end{align*}
where (a) results from the non-decreasing function of $J_{\alpha}(x,\lambda)$ in $x$ given $\lambda$ (see proof of Theorem \ref{theorem:stationary}). Hence, an $\alpha$-optimal policy is the threshold type.  

Finally, we conclude that a cost-optimal policy is the threshold type as well since a cost-optimal policy is the limit point of $\alpha$-optimal policies with $\alpha \rightarrow 1$ if both conditions in the proof of Theorem \ref{theorem:stationary} hold \cite{hsuage} .
\end{proof}

To find an optimal threshold for minimizing the average cost, we explicitly derive the average cost in the next theorem. 
\begin{figure}[!t]
\centering
\includegraphics[width=.35\textwidth]{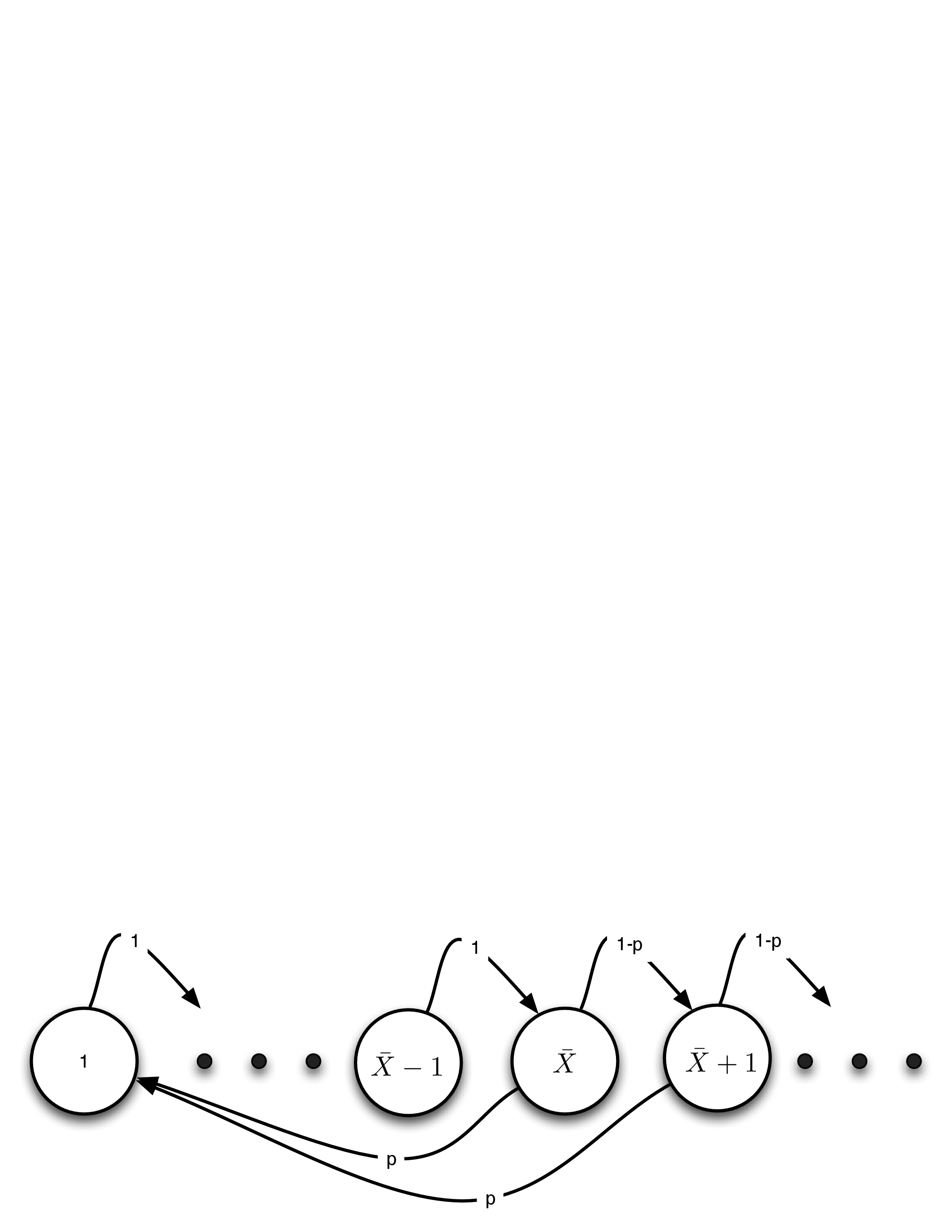}
\caption{The post-action age $Y(t)$ under the threshold-type policy forms a DTMC.}
\label{fig:post-age}
\end{figure}
\begin{theorem}
Given the threshold-type policy with the threshold $\bar{X}\in \{1, 2, \cdots\}$, then 
 the average  cost, denoted by $\mathscr{C}(\bar{X})$, under the policy is 
\begin{align}
\mathscr{C}(\bar{X})=\frac{\frac{\bar{X}^2}{2}+(\frac{1}{p}-\frac{1}{2})\bar{X}+\frac{1}{p^2}-\frac{1}{p}+C}{\bar{X}+\frac{1-p}{p}}. \label{eq:threshold-cost}
\end{align}
\end{theorem}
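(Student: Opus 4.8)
The plan is to track the \emph{post-action age} $Y(t)$, defined as the age remaining immediately after the slot-$t$ action is applied, i.e. $Y(t)=X(t)\bigl(1-a(t)\Lambda(t)\bigr)$, so that $Y(t)=0$ precisely when a successful update occurs and $Y(t)=X(t)$ otherwise. The two facts that make this variable convenient are $X(t+1)=Y(t)+1$ and, consequently, that the age part $x+1-xa\lambda$ of the immediate cost in \eqref{eq:cost} equals $Y(t)+1$. Thus the average cost splits into the time-average of $Y(t)+1$ plus $C$ times the long-run fraction of slots in which an update is made, and the task reduces to a steady-state computation on $Y(t)$.

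First I would establish that, under the threshold-$\bar X$ policy, $Y(t)$ is a time-homogeneous DTMC (Fig.~\ref{fig:post-age}) and write down its transitions. Since the next pre-action age is $Y(t-1)+1$, from a state $y\le \bar X-2$ the chain moves deterministically to $y+1$ (the threshold is not yet reached, so no update is permitted), whereas from a state $y\ge \bar X-1$ it jumps to $0$ with probability $p$ (an arrival triggers an update) and to $y+1$ with probability $1-p$. Next I would solve the balance equations for the stationary distribution $\boldsymbol\rho=(\rho_0,\rho_1,\dots)$: the deterministic transitions force $\rho_0=\cdots=\rho_{\bar X-1}=:c$, and the single-step contraction by $1-p$ beyond the threshold yields the geometric tail $\rho_{\bar X-1+k}=c(1-p)^{k}$ for $k\ge0$. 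Normalizing $\sum_y\rho_y=1$ gives $c=\bigl(\bar X+\frac{1-p}{p}\bigr)^{-1}$, which already produces the denominator of \eqref{eq:threshold-cost}; the geometric tail also guarantees positive recurrence and a finite mean, so that the long-run average cost legitimately coincides with the stationary expectation.

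Finally I would assemble the two cost terms. The update rate equals $\rho_0=c$, because an update is exactly a visit to $Y=0$, so the updating contribution is $Cc$. For the age contribution I would compute $E[Y]=\sum_y y\rho_y$ by summing the arithmetic block $y=0,\dots,\bar X-1$ and the geometric tail separately, then add the constant $1$ (recall the age part is $Y(t)+1$) together with $Cc$, and factor out $c$; the resulting numerator should match that of \eqref{eq:threshold-cost}.

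The main obstacle is purely this closed-form evaluation: collecting the series $\sum_{k\ge0}(\bar X+k)(1-p)^{k}$ against the finite sum $\sum_{y=0}^{\bar X-1} y$ and verifying that, after using $1=c\bigl(\bar X+\frac{1-p}{p}\bigr)$, the $\bar X$-linear and constant pieces collapse to $(\frac1p-\frac12)\bar X+\frac1{p^2}-\frac1p$, so that $\mathscr{C}(\bar X)=c\cdot(\text{numerator})$ reproduces \eqref{eq:threshold-cost} exactly. The only conceptual care needed is to confirm positive recurrence and integrability of $Y$ (immediate from $0<p\le1$ and the geometric tail), which justifies identifying the $\limsup$-average with the stationary mean rather than a genuinely hard step.
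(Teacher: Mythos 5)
Your proposal is correct and follows essentially the same route as the paper's own proof: introduce the post-action age, observe that under the threshold-$\bar X$ policy it forms a one-dimensional DTMC with a uniform block of mass below the threshold and a geometric tail above it, and evaluate the long-run average cost as the stationary expectation (age term plus $C$ times the visit frequency of the update state). The only differences are cosmetic --- you index the post-action age from $0$ (update $\Rightarrow Y=0$) while the paper sets it to the next-slot age (update $\Rightarrow Y=1$), and you make explicit the positive-recurrence justification that the paper leaves implicit.
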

\begin{proof}
Let $Y(t)$ be the  age \textit{after} an action in slot $t$; precisely, if $\mathbf{s}(t)=(x,\lambda)$ and $a(t)=a$, then $Y(t)=x+1-x\cdot a\cdot \lambda$. Note that $Y(t)$, called  \textit{post-action age} (similar to the post-decsion state \cite{AMDP:Powell}), is  different from the \textit{pre-action age} $X(t)$. 

The post-action age $Y(t)$ forms a DTMC in Fig. \ref{fig:post-age}. 
Moreover, we associate each state in the DTMC with a cost. The DTMC incurs the cost of $C+1$ in slot $t$ when the post-action age in slot $t$ is $Y(t)=1$ since the post-action age $Y(t)=1$ implies that the BS updates the user, while incurring the cost of $y$ in slot $t$ when the post-action age is $Y(t)=y \neq 1$. Then, the steady-state distribution $\boldsymbol{\pi}=(\pi_1, \pi_2, \cdots)$ of the DTMC is  
\begin{align*}
\pi_i=\left\{
\begin{array}{ll}
\frac{1}{\bar{X}+\frac{1-p}{p}} & \text{if $i=1, \cdots, \bar{X}$;}\\
\frac{1}{\bar{X}+\frac{1-p}{p}}(1-p)^{i-\bar{X}} & \text{if $i=\bar{X}+1, \cdots$}.
\end{array}
\right.
\end{align*}

Therefore, the average cost of the DTMC is 
\begin{align*}
(1+C)\pi_1 +\sum_{i=2}^{\infty} i\pi_i=\frac{\frac{\bar{X}^2}{2}+(\frac{1}{p}-\frac{1}{2})\bar{X}+\frac{1}{p^2}-\frac{1}{p}+C}{\bar{X}+\frac{1-p}{p}}.
\end{align*}
\end{proof}

We want to elaborate on the post-action age in the proof. There might be  alternatives for obtaining the average cost as follows:
\begin{itemize}
	\item As in many literature, e.g., \cite{gittins2011multi}, we can find an optimal action for each state by solving the \textit{average cost optimality equation} \cite{MDP:Puterman}. However, we might not arrive at the average cost optimality equation as this is an infinite-state MDP \cite{stationary-policy:Sennott}. Even though  the average cost optimality equation is established, it is usually hard to solve the optimality equation directly. 
	\item Given threshold $\bar{X}$, then state $(X(t), \Lambda(t))$ forms a two-dimensional DTMC. It is usually hard to solve the steady-state distribution for a multi-dimensional DTMC. 
	\item Given  threshold $\bar{X}$, the pre-action age $X(t)$ forms a DTMC as well. However, we cannot associate each state with a cost, since the cost depends on not only state but also action (see Eq. (\ref{eq:cost})). On the contrary, the cost for the post-action age $Y(t)$ is determined by  state only. 
\end{itemize}

%
%
\subsection{Deriving the Whittle index}
Now, we are ready to define the Whittle index.  
\begin{define}
We define the Whittle index $I(\mathbf{s})$ by  the  cost $C$ that makes  both actions for state  $\mathbf{s}$ equally desirable. 
\end{define}

\begin{theorem} \label{theorem:whittle}
The Whittle index of the  sub-problem for state $(x,\lambda)$ is 
\begin{align*}
I(x,\lambda)=\left\{
\begin{array}{ll}
0 & \text{if $\lambda=0$;}\\
\frac{x^2}{2}-\frac{x}{2}+\frac{x}{p} & \text{if $\lambda=1$.}
\end{array}
\right.
\end{align*}
\end{theorem}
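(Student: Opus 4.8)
The plan is to split on the arrival indicator $\lambda$, using the threshold characterization of Theorem \ref{theorem:threshold} and the closed-form average cost $\mathscr{C}(\bar{X})$ from the preceding theorem. The case $\lambda=0$ is immediate from the model: inspecting the transition probabilities, when no packet arrives the next-state distribution is identical under $a=0$ and $a=1$, and the age term $x\cdot a\cdot\lambda$ in the immediate cost \eqref{eq:cost} vanishes. Hence updating and idling differ only by the additive updating cost $C\cdot a$, so the two actions are equally desirable exactly when $C=0$, giving $I(x,0)=0$.

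For $\lambda=1$ the key idea is to recast the state-wise indifference in the definition of the Whittle index as an indifference between two \emph{neighbouring threshold policies}. By Theorem \ref{theorem:threshold}, a cost-optimal policy updates at state $(x,1)$ if and only if $x\geq\bar{X}$; therefore the threshold choices $\bar{X}=x$ and $\bar{X}=x+1$ describe policies that agree everywhere except at age $x$, where the former updates and the latter idles. Consequently, the cost $C$ that renders both actions equally desirable at $(x,1)$ is precisely the $C$ for which these two policies incur the same long-run average cost, i.e. $\mathscr{C}(x)=\mathscr{C}(x+1)$.

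I would then solve this equation for $C$ using \eqref{eq:threshold-cost}. Writing $\mathscr{C}(\bar{X})=(f(\bar{X})+C)/g(\bar{X})$, where $g(\bar{X})=\bar{X}+(1-p)/p$ is the denominator and $f(\bar{X})$ is the $\bar{X}$-dependent part of the numerator, cross-multiplication together with $g(x+1)-g(x)=1$ collapses $\mathscr{C}(x)=\mathscr{C}(x+1)$ to the single identity $C=f(x+1)g(x)-f(x)g(x+1)$. Substituting the explicit $f$ and $g$ and using $f(x+1)-f(x)=x+\tfrac{1}{p}$, the remaining simplification is routine and yields $I(x,1)=\tfrac{x^2}{2}-\tfrac{x}{2}+\tfrac{x}{p}$.

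The main obstacle is the justification underpinning the second paragraph: rigorously tying the indifference definition of the index to the crossover equality $\mathscr{C}(x)=\mathscr{C}(x+1)$. This requires that the optimal action at $(x,1)$ be monotone in $C$, so that there is a well-defined critical subsidy at which the optimal threshold switches between $x$ and $x+1$; this monotonicity is exactly the \emph{indexability} property and is the delicate point, since it must be argued from the threshold structure rather than assumed. Once indexability is in hand, the algebraic derivation of the closed form is entirely mechanical.
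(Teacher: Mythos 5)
Your route is the paper's own: dispose of $\lambda=0$ by noting the two actions have identical dynamics and differ only by $C\cdot a$, and for $\lambda=1$ reduce the indifference condition to the crossover equation $\mathscr{C}(x)=\mathscr{C}(x+1)$ and solve for $C$. Your algebra is also correct: with $g(x+1)-g(x)=1$ and $f(x+1)-f(x)=x+\tfrac{1}{p}$, cross-multiplication gives $C=f(x+1)g(x)-f(x)g(x+1)=\bigl(x+\tfrac{1}{p}\bigr)g(x)-f(x)=\tfrac{x^2}{2}-\tfrac{x}{2}+\tfrac{x}{p}$. The problem is the step you yourself flag as the ``main obstacle'': the claim that indifference between the two actions at $(x,1)$ is \emph{precisely} equality of the average costs of the threshold-$x$ and threshold-$(x+1)$ policies. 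As stated this is a non sequitur: two neighbouring threshold policies could in principle have equal average cost while both being suboptimal (if the truly optimal threshold lay elsewhere), in which case the optimal continuation after each action would be described by neither policy, and equality of $\mathscr{C}(x)$ and $\mathscr{C}(x+1)$ would say nothing about which action is optimal at $(x,1)$. Moreover, your proposed repair---prove monotonicity of the optimal action in $C$, i.e.\ indexability, first---is never carried out, so the proof defers its central justification to a property you do not establish.

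The paper closes exactly this gap by a different and more direct device: strict convexity. Viewing $\mathscr{C}(\cdot)$ of Eq.~(\ref{eq:threshold-cost}) as a function of a \emph{real} argument $x\geq 1$, it is strictly convex, so $\mathscr{C}(x)=\mathscr{C}(x+1)$ holds if and only if the continuous minimizer $x^*$ lies in $(x,x+1)$, which by convexity is equivalent to \emph{both} $x$ and $x+1$ being globally optimal integer thresholds. Combined with Theorem~\ref{theorem:threshold} (cost-optimal policies are threshold-type, so ``update optimal at $(x,1)$'' means some optimal threshold is $\leq x$ and ``idle optimal'' means some optimal threshold is $\geq x+1$), this is exactly the statement that both actions at $(x,1)$ are equally desirable; it also yields uniqueness of the critical subsidy, since the crossover equation is linear in $C$ and for any other $C$ convexity forces a single optimal action. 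If you prefer your indexability route, you would need to prove the monotonicity lemma directly---e.g.\ the discounted-cost comparison in the paper's proof of Theorem~\ref{theorem:optimal-threshold}: if idling is optimal at $(x,1)$ under cost $C_1$, it remains optimal under any $C_2\geq C_1$---and do so without invoking the index formula, since in the paper indexability is derived \emph{after} (and from) Theorem~\ref{theorem:whittle}.
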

\begin{proof}
It is obvious that the Whittle index for state $(x,0)$ is $I(x,0)=0$ as  both actions result in the same immediate cost and the same age of next slot if $C=0$. 

Let $g(x)=\mathscr{C}(x)$ in Eq. (\ref{eq:threshold-cost})  for the domain of $\{x\in \mathbb{R}: x\geq 1\}$. Note that $g(x)$ is strictly convex in the domain. Let $x^*$ be the minimizer of $g(x)$. Then, an optimal threshold for minimizing the average cost $\mathscr{C}(\bar{X})$ is either $\lfloor x^* \rfloor$ or $\lceil x^* \rceil$: the optimal threshold is $\bar{X}^*=\lfloor x^* \rfloor$ if $\mathscr{C}(\lfloor x^* \rfloor) \leq  \mathscr{C}(\lceil x^* \rceil)$ and $\bar{X}^*=\lceil x^* \rceil$  if $\mathscr{C}(\lceil x^* \rceil) \leq \mathscr{C}(\lfloor x^* \rfloor)$. If there is a tie, both choices are optimal.  

Hence,  both actions for state $(x,1)$ are equally desirable if and only if the age $x$ satisfies
\begin{align*}
\mathscr{C}(x)=\mathscr{C}(x+1),
\end{align*}
i.e., $x= \lfloor x^* \rfloor$ and both thresholds of $x$ and $x+1$ are optimal. 
By solving the above equation, we  obtain the cost to make both actions equally desirable, as the Whittle index in the theorem. 
\end{proof}

According to Theorem \ref{theorem:whittle}, both actions might have a tie. If there is a tie, we break the tie in favor of idling. Then, we can explicitly express the optimal threshold  in the next theorem. 

\begin{theorem} \label{theorem:optimal-threshold}
The optimal threshold for minimizing the average cost is $\bar{X}^*=x$ if the cost satisfies $I(x-1,1) \leq  C < I(x,1)$, for all $x=1, 2, \cdots$.
\end{theorem}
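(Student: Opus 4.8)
The plan is to exploit the fact, established algebraically in the proof of Theorem~\ref{theorem:whittle}, that for \emph{every} integer $x \ge 1$ the equality $\mathscr{C}(x) = \mathscr{C}(x+1)$ holds exactly when $C = I(x,1)$. The strategy is to promote this single-point identity into a sign characterization of $\mathscr{C}(k) - \mathscr{C}(k+1)$ valid for all $k$, and then to chain these signs across $k$ to locate the minimizing integer threshold.

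First I would write the average cost of Eq.~(\ref{eq:threshold-cost}) as $\mathscr{C}(\bar{X}) = (N(\bar{X}) + C)/d(\bar{X})$, where $d(\bar{X}) = \bar{X} + (1-p)/p > 0$ is the denominator and $N(\bar{X})$ collects the $C$-free terms of the numerator. For a fixed integer $k \ge 1$, clearing the positive denominators in $\mathscr{C}(k) - \mathscr{C}(k+1)$ shows that this difference carries the same sign as $N(k)d(k+1) - N(k+1)d(k) + C\,[d(k+1)-d(k)]$, which is \emph{affine in $C$ with slope $d(k+1)-d(k)=1>0$}. Since its unique root is $C = I(k,1)$ by the Whittle-index identity above, I conclude $\mathscr{C}(k) < \mathscr{C}(k+1) \iff C < I(k,1)$ and $\mathscr{C}(k) > \mathscr{C}(k+1) \iff C > I(k,1)$.

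Next I would record that $I(k,1)$ is strictly increasing in $k$, since $I(k,1)-I(k-1,1) = (k-1)+1/p > 0$; in particular the intervals $[I(x-1,1), I(x,1))$ for $x=1,2,\dots$ partition $[0,\infty)$, with $I(0,1)=0$. Fixing $C$ in such an interval, the sign characterization produces two monotone runs: for every $k \ge x$ we have $C < I(x,1) \le I(k,1)$, so $\mathscr{C}(x) < \mathscr{C}(x+1) < \cdots$ is strictly increasing; for every $k \le x-1$ we have $C \ge I(x-1,1) \ge I(k,1)$, so $\mathscr{C}(1) \ge \cdots \ge \mathscr{C}(x)$ is non-increasing. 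Hence $\mathscr{C}(x) \le \mathscr{C}(k)$ for all integer thresholds $k \ge 1$, and $\bar{X}^*=x$ minimizes the average cost. (The strict convexity of $g$ from the proof of Theorem~\ref{theorem:whittle} offers an alternative route to this unimodality, but the sign argument is self-contained.)

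Finally I would pin down the half-open endpoints through the tie-breaking rule. On the increasing tail all inequalities are strict, so no ambiguity arises there. On the non-increasing head, the only step that can be flat is the last one, and only when $C = I(x-1,1)$, which yields a tie between thresholds $x-1$ and $x$; breaking ties in favour of idling selects the larger threshold $x$, so the left endpoint belongs to the interval. Symmetrically, at $C = I(x,1)$ the tie is between $x$ and $x+1$ and idling selects $x+1$, so the right endpoint is excluded. The main obstacle I anticipate is exactly this endpoint bookkeeping: aligning the strict-versus-weak inequalities with the idling tie-break so that precisely the half-open interval $[I(x-1,1), I(x,1))$ is produced. The algebraic reduction and the sign chaining are otherwise routine once the linear-in-$C$ form of the cost differences is extracted.
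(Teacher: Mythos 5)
Your proof is correct, but it takes a genuinely different route from the paper. The paper's proof anchors the optimal threshold only at the tie points: by the definition of the Whittle index and the idling tie-break, the optimal threshold is $x+1$ when $C=I(x,1)$; it then interpolates between these anchors by proving that the optimal threshold is monotone non-decreasing in $C$, using a comparison argument on the discounted-cost optimality equation ($\mathscr{J}_{\alpha}$) inherited from Theorems~\ref{theorem:stationary} and~\ref{theorem:threshold}. Your argument never touches the MDP machinery: you work directly with the closed-form average cost of Eq.~(\ref{eq:threshold-cost}), observe that each consecutive difference $\mathscr{C}(k)-\mathscr{C}(k+1)$ is affine in $C$ with slope $1$ and root $I(k,1)$ (which indeed matches the equation solved in the proof of Theorem~\ref{theorem:whittle}; the algebra checks out, e.g. $I(k,1)-I(k-1,1)=k-1+1/p$), and chain these signs using the monotonicity of $I(k,1)$ in $k$ to conclude that $\mathscr{C}$ is non-increasing on $\{1,\dots,x\}$ and strictly increasing on $\{x,x+1,\dots\}$, hence minimized at $x$. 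What your approach buys is self-containedness and a stronger conclusion: you verify global optimality of the threshold $x$ among all integer thresholds by explicit unimodality, and your interval partition of $[0,\infty)$ immediately re-derives the monotonicity of the optimal threshold in $C$ (which the indexability proof of the paper later relies on). Incidentally, your route also sidesteps a delicate point in the paper's monotonicity argument, where the value function $J_{\alpha}$ itself depends on $C$ but is treated as common to both costs $C_1$ and $C_2$; your sign analysis has no such dependence to track. What the paper's approach buys in exchange is structural insight at the level of the MDP (an active-set monotonicity argument in the spirit of indexability proofs) that does not require the closed form of $\mathscr{C}$ to be available.
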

\begin{proof}
Since $I(x)$ is the  cost to make both actions for state $(x,1)$ equally desirable and we break a tie in favor of idling, then the optimal threshold is $\bar{X}^*=x+1$ if the cost is $C=I(x)$, for all $x$. We  claim that  the optimal threshold monotonically increases with cost $C$, and then the theorem follows. 

To verify the claim,  we can focus on an $\alpha$-optimal policy according to the proof of Theorem \ref{theorem:threshold}.  Suppose that an $\alpha$-optimal action, associated with a cost $C_1$, for state $(x,1)$ is to idle, i.e., 
\begin{align*}
x+1+\alpha E[J_{\alpha}(x+1,\lambda')] \leq 1+C_1+\alpha E[J_{\alpha}(1,\lambda')] .
\end{align*}
Then, an $\alpha$-optimal action, associated with a cost $C_2 \geq C_1$, for state $(x,1)$ is to idle as well since 
\begin{align*}
x+1+\alpha E[J_{\alpha}(x+1,\lambda')] \leq &1+C_1+\alpha E[J_{\alpha}(1,\lambda')] \\
\leq & 1+C_2+\alpha E[J_{\alpha}(1,\lambda')].
\end{align*}
Then, the monotonicity is established.  
\end{proof}

Next, according to \cite{whittle}, we have to demonstrate the \textit{indexability}  defined as follows.  
\begin{define}
Given cost $C$, let $\mathbf{S}(C)$ be the set of states $\mathbf{s}$ such that the optimal action for the states is to idle. The sub-problem is \textit{indexable} if  the set $\mathbf{S}(C)$ monotonically increases from the empty set to the entire state space, as $C$ increases from $-\infty$ to $\infty$. 
\end{define}

\begin{theorem}
The sub-problem is indexable. 
\end{theorem}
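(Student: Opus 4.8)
The plan is to reduce indexability to a single clean statement: a state $\mathbf{s}$ lies in the idle set $\mathbf{S}(C)$ if and only if $C \geq I(\mathbf{s})$, where $I(\mathbf{s})$ is the Whittle index of Theorem~\ref{theorem:whittle}. First I would dispatch the no-arrival states $(x,0)$. From the transition probabilities, the next-state distribution out of $(x,0)$ is identical under both actions (idling and updating both lead to $(x+1,\cdot)$), so the two actions differ only in their immediate cost by the additive term $C$ in Eq.~(\ref{eq:cost}); hence idling is optimal exactly when $C \geq 0 = I(x,0)$. For the arrival states $(x,1)$ with $C \geq 0$, I would read off the characterization from Theorem~\ref{theorem:optimal-threshold}: the optimal threshold satisfies $\bar{X}^*(C) = x$ precisely when $I(x-1,1) \leq C < I(x,1)$, and the threshold policy idles at $(x,1)$ exactly when $x < \bar{X}^*(C)$. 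Because $I(\cdot,1)$ is strictly increasing on $\{x \geq 1\}$ (its derivative $x - \tfrac{1}{2} + \tfrac{1}{p}$ is positive there), these two facts rearrange to the equivalence ``$(x,1)$ idles iff $C \geq I(x,1)$.'' For $C < 0$ the same conclusion holds trivially, since $I(x,1) \geq I(1,1) = 1/p > 0$ forces updating; concretely, updating from $(x,1)$ both lowers the immediate cost (as $1 + C < x+1$) and moves to the smaller age $1$, which cannot increase the continuation cost because $J_{\alpha}$ is non-decreasing in age (proof of Theorem~\ref{theorem:stationary}).

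With the characterization $\mathbf{S}(C) = \{\mathbf{s} : C \geq I(\mathbf{s})\}$ in hand, I would verify the three requirements of the definition. Monotonicity is immediate: if $C_1 \leq C_2$ and $\mathbf{s} \in \mathbf{S}(C_1)$, then $C_2 \geq C_1 \geq I(\mathbf{s})$, so $\mathbf{s} \in \mathbf{S}(C_2)$; equivalently, this is the action-monotonicity already proved inside Theorem~\ref{theorem:optimal-threshold}, which shows that once idling is $\alpha$-optimal at a state it stays optimal as the cost grows. For the lower boundary, every index is nonnegative ($I(x,0) = 0$ and $I(x,1) \geq 1/p$), so any $C < 0$ satisfies $C < I(\mathbf{s})$ for all $\mathbf{s}$ and gives $\mathbf{S}(C) = \emptyset$. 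For the upper boundary, each fixed state $\mathbf{s}$ has a finite index and therefore belongs to $\mathbf{S}(C)$ for every $C \geq I(\mathbf{s})$, so the nested family $\{\mathbf{S}(C)\}_{C}$ eventually absorbs any prescribed state.

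The point I would treat as the main obstacle is the upper boundary, precisely because the state space is countably infinite and $I(x,1) \to \infty$ as $x \to \infty$. No finite cost $C$ can make $\mathbf{S}(C)$ literally equal the entire state space, so ``$\mathbf{S}(C)$ increases to the entire state space'' must be interpreted in the monotone-limit sense, namely $\bigcup_{C \in \mathbb{R}} \mathbf{S}(C)$ equals the full state space. I would make this reading explicit and then discharge it pointwise: each state $(x,\lambda)$ enters $\mathbf{S}(C)$ at the finite cutoff $C = I(x,\lambda)$ and remains in it thereafter, so the union over all $C$ covers every state even though no single finite $C$ does. Once the characterization and these three checks are assembled, the theorem follows directly from the definition of indexability.
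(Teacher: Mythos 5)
Your proof is correct and takes essentially the same route as the paper: both hinge on the threshold characterization of Theorem \ref{theorem:optimal-threshold} and the monotonicity of the optimal threshold in $C$, treating the $(x,0)$ states and the $C<0$ case separately. Your two refinements---the explicit identity $\mathbf{S}(C)=\{\mathbf{s}: C\geq I(\mathbf{s})\}$ and the observation that ``increases to the entire state space'' must be read as a monotone union $\bigcup_{C}\mathbf{S}(C)$, since no finite $C$ idles every state---simply make rigorous what the paper's terser proof leaves implicit.
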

\begin{proof}
If $C<0$, the optimal action for every state is to update; as such, $\mathbf{S}(0)=\emptyset$. If $C\geq 0$, then $\mathbf{S}(C)$ is composed of the set $\{\mathbf{s}=(x,0): x=1, 2, \cdots\}$  and a set of $(x,1)$ for some $x$'s. According to Theorem \ref{theorem:optimal-threshold}, the optimal threshold monotonically increases as $C$ increases, and hence the set $\mathbf{S}(C)$ monotonically increases to the entire state space. 
\end{proof}

\subsection{Scheduling algorithm design}

Now, we are ready to propose a scheduling algorithm based on the Whittle index.   For each slot $t$, the BS observes  age $X_i(t)$ and arrival indicator $\Lambda_i(t)$ for every user $u_i$; then, update a user $u_i$ with the highest value of the Whittle index $I(X_i(t),\Lambda_i(t))$. We can think of the index $I(X_i(t),\Lambda_i(t))$ as the cost to update  user $u_i$. The intuition of the scheduling algorithm is that the BS intends to send the most valuable packet. In  Fig. \ref{fig:whittle}, we compare the proposed algorithm with the age-optimal scheduling algorithm in \cite{hsuage} for two users over 100,000 slots. It turns out that the simple index algorithm almost achieves the minimum average age.


\begin{figure}
\centering
\includegraphics[width=.4\textwidth]{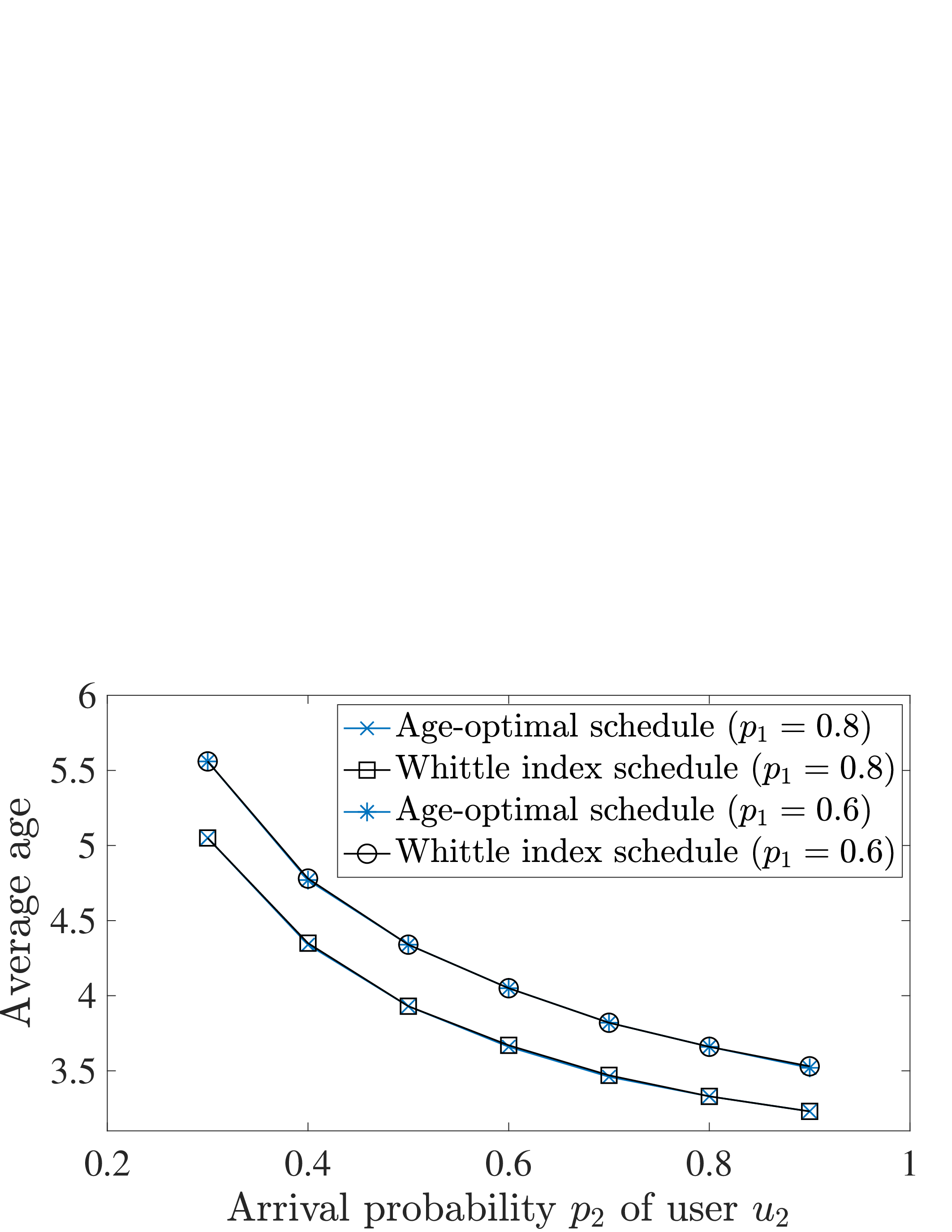}
\caption{Average age for two users under the proposed Whittle index scheduling algorithm and an age-optimal scheduling algorithm.}
\label{fig:whittle}
\end{figure}

\section{Conclusion}
This paper treated a wireless broadcast network, where many users are interested in different types of information delivered by a base-station. Under a transmission constraint, we studied a transmission scheduling problem, with respect to the age of information. We have proposed a low-complexity scheduling algorithm  leveraging the Whittle's methodology. Numerical studies showed that the proposed algorithm almost minimizes the average age. To investigate a regime under which the proposed algorithm is  optimal would be an interesting extension.  

\section*{Acknowledgement}
The author is grateful to anonymous reviewers for their constructive comments. 

{\small
\bibliographystyle{IEEEtran}
\bibliography{IEEEabrv,ref}
}

\end{document}